\newtheorem{theorem}{Theorem}[section]
\newtheorem{lemma} [theorem]{Lemma}
\newtheorem{remark}[theorem]{Remark}
\begin{document}
	
	\label{'ubf'}  
	\setcounter{page}{1}                                 

	\markboth {\hspace*{-9mm} \centerline{\footnotesize \sc
			Theta function identities and $q$ series }
	}
	{ \centerline                           {\footnotesize \sc  
			Hemant Masal, Subhash Kendre, Hemant Bhate                           } \hspace*{-9mm}              
	}

	\vspace*{-2cm}

	\begin{center}
		{ 
			{\Large \textbf { \sc Theta function identities and $q$ series
				}
			}
			\\

			\medskip

			{\sc Hemant Masal, Subhash Kendre,Hemant Bhate }\\
			{\footnotesize hemantmasal@gmail.com}\\
			{\footnotesize sdkendre@yahoo.com}\\
			{\footnotesize bhatehemant@gmail.com}
		}\\
		
	\end{center}

	\thispagestyle{empty}

	\hrulefill

	\begin{abstract}  
		{\footnotesize We establish some functional identities of theta functions, an elementary proof of classical fourth-order identities, Landen transformations, and q series from the eigenvectors of the discrete Fourier transform. Also, we derive connection between Rogers-Ramanujan type identity and theta function identity.
		}
	\end{abstract}
	\hrulefill
	
	{\small \textbf{Keywords:} Discrete Fourier transform, eigenvectors, Theta functions, q series.}

	\indent {\small {\bf 2000 Mathematics Subject Classification:} 65T50, 14K25.}
	
	\section{Introduction}
	
The Discrete Fourier Transform (DFT) of degree $n$ can be represented by a unitary matrix \cite{mehta1987eigenvalues,terras1999Fourier} $A$ whose entries  are given by $A_{jk}=\frac{1}{\sqrt{n}}e^{\frac{2\pi i}{n}jk}$. The problem of diagonalizing the DFT arises in many contexts and has been studied extensively. In Section III of \cite{mehta1987eigenvalues}, the spectral decomposition of the DFT matrix is explained and it gives rise to many relations between spectral projectors, the multiplicity of eigenvalues of the DFT, and multiplicities of eigenvalues of spectral projectors. In addition, the eigenvectors of the DFT which involve Hermite functions are obtained.

\par Matveev \cite{matveev2001intertwining} gives the spectral decomposition of a bounded operator $U$ on a Hilbert space which is a root of the identity, (i.e.$ U^n=I$). Note that $A^4=I,$ and $A$ is unitary. Matveev applies the spectral decomposition to the DFT matrix. 
\par The generalized theta function \cite{matveev2001intertwining,mckean1999elliptic} is defined by 
\begin{equation}
	\theta(x, \tau,\nu)=\sum_{m=-\infty}^{\infty}e^{\pi i \tau m^{2\nu}+2\pi imx},~\nu \in {\mathbb{Z}}^+, ~Im~\tau >0.
\end{equation}

and the generalized theta function with characteristics $(a,b)$ is 
\begin{equation}
	\theta_{a,b}(x, \tau,\nu)=\sum_{m=-\infty}^{\infty}e^{\pi i \tau (m+a)^{2\nu}+2\pi i(m+a)(x+b)},\quad
	\nu \in {\mathbb{Z}}^+, ~~~Im~\tau >0.
\end{equation}
Matveev \cite{matveev2001intertwining} shows that these theta functions and generalized theta functions give a rise to a set of eigenvectors of the DFT. 
\par Identities between theta functions are classically proved using their translational properties, location and multiplicities of zeros, and periodicity or quasi-periodicity properties. This is the method used for example in \cite{mckean1999elliptic} to prove null identities and the duplication formula. Similar ideas have been extended in \cite{cooper2008determinant} to obtain determinant identities for Theta functions. In \cite{cooper2008determinant}, it is shown that the Gosper-Schroeppel identity can be obtained either by translational properties of theta functions or as a consequence of the fact that $\det[(r_j s_k + t_j u_k)_{1~ \leq~ j,k ~\leq ~3}]=0.$ These ideas were further extended in \cite{toh2008generalized} to obtain determinant identities for m-th order Theta functions.
\par The classical Watson's identity and the  Riemann identity are consequences of the eigenvalues and eigenvectors of the DFT \cite{malekar2010discrete,malekar2012discrete}. 
\par In this article, the facts that theta functions with characteristics give rise to the eigenvectors of the DFT and that the corresponding eigenvalues are non-degenerate are exploited to obtain new identities for theta functions, Landen transformations and a new elementary proof of classical fourth-order identities. These are in fact determinantal identities, the determinant being of a submatrix of the matrix of the eigenvectors of the DFT.
\par This method also gives new identities, for example, the following results are obtained:
\begin{itemize} 
	
	\item (Lemma \ref{L1}) Order one (linear) relation
	\begin{equation*}
		\begin{split}
			& \sqrt{2}\theta(2x, 4\tau)+\theta (x, \tau)= (\sqrt{2}+1)\left(\sqrt{2}\theta_{1/2,0}(2x, 4\tau)+\theta_{0, 1/2} (x, \tau )\right).
		\end{split}
	\end{equation*}

	\item Landen type transformation theta functions are also introduced. For example, the following result is derived in Lemma \ref{ldn}, 
	\begin{equation*}
		\begin{split}
			\big[ \theta(2x, 8\tau) + \theta_{1/2,0}(2x, 8\tau) \big]^2 - \big[ \theta_{1/4,0}(2x, 8\tau) + \theta_{3/4,0}(2x, 8\tau) \big]^2 
			=\theta_{0,1/2}(x, \tau)\theta_{0,1/2}(0, \tau).
		\end{split}		
	\end{equation*}
	\item In lemma \ref{R-R}, it is shown that the Rogers-Ramanujan type identity 
	\begin{equation*}
		\begin{aligned}
			\prod_{n=1}^{\infty} \left( 1-q^{4n}\right) \left( 1+q^{4n-3}z^2 \right) \left( 1+q^{4n-1}z^{-2} \right)& +  z \prod_{n=1}^{\infty} \left( 1-q^{4n}\right) \left( 1+q^{4n-1}z^2 \right) \left( 1+q^{4n-3}z^{-2} \right)\\
			&= \prod_{n=1}^{\infty} \left( 1-q^{n}\right) \left( 1+q^{n-1}z \right) \left( 1+q^{n}z^{-1} \right).
		\end{aligned}
	\end{equation*}
	corresponds to the first theta function identity obtained in lemma \ref{s3},
	
	\item  Following $q$ identity is given in equation (\ref{fq})
	\begin{equation}
		\begin{split}
			\sum_{m=-\infty}^{\infty}q^{m(2m+1)}
			=\frac{(q^{2};q^{2})^2_{\infty}[(-q;q^{2})^2_{\infty}+2q^{1/4}(-q^{2};q^{2})^2_{\infty}]^2-(q^{1/4};q^{1/4})^2_{\infty}(q^{1/8};q^{1/4})^4_{\infty}}{4q^{1/8}(q^{4};q^{4})_{\infty}[(-q^{2};q^{4})^2_{\infty}+2q^{1/2}(-q^{4};q^{4})^2_{\infty}]}.
		\end{split}
	\end{equation}
\end{itemize} 
These identities (and many more) have not appeared earlier.
\section{Eigenvalues and eigenvectors of A}

We first recall basic facts about the eigenvalues of the DFT matrix and their multiplicities, (see \cite{ mehta1987eigenvalues,matveev2001intertwining,terras1999Fourier}).
Let $n$ be fixed, and $A$ be the DFT matrix whose $(j,k)^{th}$ entry  is given by $A_{jk}=\frac{1}{\sqrt{n}}e^{\frac{2\pi i}{n}jk}.$  
\[ {A^{2}}_{jk} =
\begin{cases}
	1       & \quad \text{if } j+k \text{ =0 (mod n)}\\
	0 & \quad  \text{ otherwise}\\
\end{cases}\]

\par The eigenvalues of $A$ are $1,-1, i,-i,$ and they have non-negative multiplicity. The multiplicities are given by $[(n+4)/4], [(n+2)/4],[(n+1)/4]$ and $[(n-1)/4]$ respectively. where $[x]$ is the greatest integer not greater than $x$.
We recall the following Theorem,
\begin{theorem}(Matveev \cite{matveev2001intertwining})
	\label{matveev}
	For any $\tau$, with Im $\tau >0$ vector  $G(x, \tau,\nu, k)=G(k)$, whose $j^{th}$ component ($0\leq j\leq n-1$) is given by
	\begin{align}\label{eq:mv}\begin{aligned}
			G_j(x, \tau, \nu, k)=&\theta_{(\frac{j}{n},0)}(x, \tau, \nu)+(-1)^k\theta_{(\frac{-j}{n},0)}(x, \tau, \nu)\\
			+&\frac{1}{\sqrt{n}}\bigg[ (-i)^k\theta\bigg(\frac{j+x}{n},\frac{\tau}{n^{2\nu} },\nu \bigg)+ (-i)^{3k}\theta \bigg(\frac{x-j}{n},\frac{\tau}{n^{2\nu}},\nu \bigg)\bigg]
		\end{aligned}
	\end{align}
	is the eigenvector of the DFT corresponding to eigenvalue $i^k$
	i.e, $ AG(k)=i^kG(k).	$
\end{theorem}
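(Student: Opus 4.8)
The plan is to verify directly that the vector $G(k)$ defined in (\ref{eq:mv}) satisfies $AG(k)=i^kG(k)$ by computing the action of $A$ on each of the four constituent pieces. I would begin by writing $G_j(k)$ as a sum of four terms: two theta functions with characteristics, $\theta_{(j/n,0)}$ and $\theta_{(-j/n,0)}$, and two rescaled ordinary theta functions, $\theta\big(\tfrac{j+x}{n},\tfrac{\tau}{n^{2\nu}}\big)$ and $\theta\big(\tfrac{x-j}{n},\tfrac{\tau}{n^{2\nu}}\big)$. The matrix $A$ acts on a vector $v=(v_0,\dots,v_{n-1})$ by $(Av)_j=\frac{1}{\sqrt{n}}\sum_{k=0}^{n-1}e^{2\pi ijk/n}v_k$, so the computation reduces to evaluating these discrete Fourier sums on each piece.

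The key technical step is a discrete Poisson-type summation. First I would expand $\theta_{(j/n,0)}(x,\tau,\nu)=\sum_m e^{\pi i\tau(m+j/n)^{2\nu}+2\pi i(m+j/n)x}$ and apply $A$ in the index $j$. Summing the geometric-type factor $e^{2\pi ijk/n}$ against the $j$-dependence and then regrouping the double sum over $m$ and the new index should collapse the result into one of the rescaled ordinary theta functions, up to an explicit power of $\sqrt{n}$ and a fourth root of unity $(-i)^{\text{(something)}}$. Symmetrically, applying $A$ to the rescaled ordinary theta functions $\theta\big(\tfrac{\pm j+x}{n},\tfrac{\tau}{n^{2\nu}}\big)$ should, after the same Fourier sum and reindexing, reproduce the theta functions with characteristics. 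The whole argument is therefore a closed $2\times2$ block structure: the characteristic thetas and the rescaled thetas map into each other under $A$, and assembling these with the prefactors $1$, $(-1)^k$, $\frac{1}{\sqrt{n}}(-i)^k$, $\frac{1}{\sqrt{n}}(-i)^{3k}$ forces the net effect to be multiplication by $i^k$.

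The main obstacle I anticipate is bookkeeping the phase factors and the fractional-part/residue issues when reindexing the lattice sums. Concretely, when the inner sum over $m\in\mathbb{Z}$ is combined with the Fourier index running over residues mod $n$, one must carefully split each integer uniquely as a residue plus $n$ times an integer, and track how the quadratic (or, for $\nu>1$, higher-degree) exponent $(m+j/n)^{2\nu}$ transforms under the rescaling $\tau\mapsto\tau/n^{2\nu}$. Verifying that the arising Gauss-type sums $\sum_{j}e^{2\pi ijk/n}$ evaluate correctly, and that the four pieces recombine with precisely the powers $(-i)^k$ and $(-i)^{3k}$ so that the eigenvalue emerges as $i^k$ rather than some other root of unity, is where the delicate sign and phase accounting lives.

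Once the four Fourier computations are in hand, the conclusion is immediate: collecting terms shows $(AG(k))_j=i^k G_j(k)$ componentwise for every $j$, which is exactly the claim $AG(k)=i^kG(k)$. I would present the $\nu=1$ case in full to expose the mechanism and then remark that the general $\nu$ case follows by the identical reindexing, since the rescaling $\tau/n^{2\nu}$ is tuned precisely to the degree $2\nu$ of the exponent.
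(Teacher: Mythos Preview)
The paper does not give its own proof of this theorem: it is simply recalled from Matveev \cite{matveev2001intertwining} without argument, so there is no proof in the paper to compare against. Your plan of direct verification is correct and is essentially how one proves the result. Two small sharpenings: first, no genuine Gauss sums arise---for the characteristic-theta pieces the sum over $j\in\{0,\dots,n-1\}$ and $m\in\mathbb{Z}$ merges into a single sum over $p=nm+j\in\mathbb{Z}$, yielding $\tfrac{1}{\sqrt{n}}\theta\big(\tfrac{x\pm l}{n},\tfrac{\tau}{n^{2\nu}}\big)$ immediately, while for the rescaled-theta pieces the $j$-sum is just orthogonality of characters on $\mathbb{Z}/n\mathbb{Z}$, picking out $p\equiv \mp l\pmod n$ and returning $\theta_{(\mp l/n,0)}$. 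Second, the phase check is clean once you note $(-i)^{3k}=i^k$ and $i^k(-i)^k=1$, so the four pieces permute with exactly the coefficients needed for $(AG(k))_l=i^kG_l(k)$.
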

\section{Some Identities}
In sections 3-5, we consider the case $\nu=1.$ i.e. $\theta(x,\tau,\nu)=\theta(x,\tau,1)=\theta(x,\tau).$ 
\par The identities of theta functions have an important and long history \cite{mckean1999elliptic}. The fact that some of the Theta function identities actually arise from multiplicities of the eigenvalues of the DFT is relatively recent. The Riemann identities and Watson's addition formula have been derived in this manner \cite{malekar2012discrete,malekar2017nu}.
\par Using Theorem [\ref{matveev}], we derive many identities between the theta functions. We will list some of these identities. 
\begin{lemma}\label{L1} The following identities are holds for $n=2.$
	\begin{enumerate}
		\item  $
		\sqrt{2}\theta(2x, 4\tau)+\theta \left( x, \tau \right)= (1+\sqrt{2})(\sqrt{2}\theta_{1/2,0}(2x,4 \tau)+\theta_{0,1/2} (x, \tau )),	$ 
		\item  $\sqrt{2}\theta(2x, 4\tau)-\theta \left( x, \tau  \right)=(1-\sqrt{2}) [ \sqrt{2}\theta_{1/2,0}(2x, 4\tau)-\theta_{0,1/2} \left( x, \tau \right) ],
		$
	\end{enumerate}
\end{lemma}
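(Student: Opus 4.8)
The plan is to specialize Matveev's eigenvector formula \eqref{eq:mv} to $n=2$, $\nu=1$ and to exploit the fact that for this value of $n$ the relevant eigenspaces of $A$ are one-dimensional. The multiplicity formula gives $[(n+4)/4]=1$, $[(n+2)/4]=1$, $[(n+1)/4]=0$, $[(n-1)/4]=0$ at $n=2$, so only the eigenvalues $1$ (from $k=0$) and $-1$ (from $k=2$) occur, each simple, while $G(1)$ and $G(3)$ collapse to zero vectors. The DFT matrix is $A=\tfrac{1}{\sqrt2}\left(\begin{smallmatrix}1&1\\1&-1\end{smallmatrix}\right)$, and solving $(A\mp I)v=0$ directly shows that the eigenspace for $1$ is spanned by $(1,\sqrt2-1)$ and that for $-1$ by $(1,-(1+\sqrt2))$. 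Hence any eigenvector $G(k)$ produced by Theorem~\ref{matveev} must be a scalar multiple of the corresponding basis vector, which forces a fixed ratio between its two components; that ratio is the identity we are after.

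First I would write out the two components of $G(k)$ at $n=2$, using $n^{2\nu}=4$. For $k=0$ this gives $G_0(0)=2\theta(x,\tau)+\sqrt2\,\theta(x/2,\tau/4)$ together with $G_1(0)=2\theta_{1/2,0}(x,\tau)+\tfrac{1}{\sqrt2}\bigl[\theta((1+x)/2,\tau/4)+\theta((x-1)/2,\tau/4)\bigr]$, and the $k=2$ case is identical except that the second (non-characteristic) block carries a minus sign, since $(-i)^2=(-i)^6=-1$. To bring these into the stated form I would record two reductions obtained by shifting the summation index: $\theta_{-1/2,0}(x,\tau)=\theta_{1/2,0}(x,\tau)$ (replace $m\mapsto m-1$), and $\theta((1+x)/2,\tau/4)=\theta((x-1)/2,\tau/4)=\theta_{0,1/2}(x/2,\tau/4)$, the latter because the factor $e^{\pm\pi i m}=(-1)^m$ is precisely the half-integer characteristic $b=1/2$. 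With these, $G_0(0)=2\theta(x,\tau)+\sqrt2\,\theta(x/2,\tau/4)$ and $G_1(0)=2\theta_{1/2,0}(x,\tau)+\sqrt2\,\theta_{0,1/2}(x/2,\tau/4)$, and analogously for $k=2$.

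Next I would impose the proportionality. Since $G(0)=G_0(0)\,(1,\sqrt2-1)$ (and $G_0(0)\neq0$ for generic $\tau$ with $\operatorname{Im}\tau>0$), the relation $G_1(0)=(\sqrt2-1)\,G_0(0)$ holds as a functional identity, and likewise $G_1(2)=-(1+\sqrt2)\,G_0(2)$ from the eigenvalue $-1$ case. Substituting $x\mapsto 2x$ and $\tau\mapsto 4\tau$ sends $\theta(x/2,\tau/4)\mapsto\theta(x,\tau)$, $\theta(x,\tau)\mapsto\theta(2x,4\tau)$, $\theta_{0,1/2}(x/2,\tau/4)\mapsto\theta_{0,1/2}(x,\tau)$ and $\theta_{1/2,0}(x,\tau)\mapsto\theta_{1/2,0}(2x,4\tau)$; dividing the $k=0$ relation through by $\sqrt2$ then yields
\[
\sqrt2\,\theta_{1/2,0}(2x,4\tau)+\theta_{0,1/2}(x,\tau)=(\sqrt2-1)\bigl[\sqrt2\,\theta(2x,4\tau)+\theta(x,\tau)\bigr].
\]
Multiplying by $1+\sqrt2$ and using $(1+\sqrt2)(\sqrt2-1)=1$ rearranges this into statement (1). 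The $k=2$ relation similarly gives $\sqrt2\,\theta_{1/2,0}(2x,4\tau)-\theta_{0,1/2}(x,\tau)=-(1+\sqrt2)\bigl[\sqrt2\,\theta(2x,4\tau)-\theta(x,\tau)\bigr]$; solving for $\sqrt2\,\theta(2x,4\tau)-\theta(x,\tau)$ and using $-1/(1+\sqrt2)=1-\sqrt2$ produces statement (2). I expect the main obstacle to be purely computational: keeping the index-shift identifications of the characteristic terms correct and tracking the $x\mapsto 2x$, $\tau\mapsto 4\tau$ rescaling consistently across all four theta terms. Once those are pinned down, the remaining manipulation of the $\sqrt2$ coefficients is routine.
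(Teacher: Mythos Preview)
Your proposal is correct and follows essentially the same approach as the paper: both exploit that for $n=2$ the eigenvalues $\pm1$ of the DFT matrix are simple, so the Matveev eigenvector $G(k)$ must be proportional to an explicit numerical eigenvector, and that proportionality is the claimed identity. The paper phrases this as $\det[v_1,v_2]=0$ with $v_2=\tfrac{1}{\sqrt2}(1+\sqrt2,\,1)^T$ and leaves the reduction $\theta((x\pm1)/2,\tau/4)=\theta_{0,1/2}(x/2,\tau/4)$ and the substitution $(x,\tau)\mapsto(2x,4\tau)$ implicit, whereas you carry these out explicitly; the underlying argument is the same.
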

\begin{proof}
	For $n=2$, the eigenvalues of DFT matrix are  $\{ 1,-1\}$ each with  multiplicity one. The eigenvector $v_1$ corresponding to eigenvalue $1$ is given in Theorem  \ref{matveev} and the eigenvector $v_2$ also corresponds to eigenvalue 1.
	\begin{align*}
		v_1=\begin{pmatrix}
			2\theta(x, \tau)+\sqrt{2}\theta \left( \frac{x}{2}, \frac{\tau}{2^{2}}\right)\\
			2\theta_{\frac{1}{2},0}(x, \tau)+\sqrt{2}\theta \left( \frac{x+1}{2}, \frac{\tau}{2^{2}}  \right)
		\end{pmatrix},
		v_{2}=\frac{1}{\sqrt{2}} \begin{pmatrix}
			1+\sqrt{2} \\
			1\\
		\end{pmatrix}.
	\end{align*}
	\par 	As the eigenvalue $1$ is non-degenerate,the eigenvectors $v_1, v_2$ must be linearly dependent. It is clear that, det$[v_1,v_2]=0$.
Therefore, 
	\begin{align}
		\begin{split}
			\frac{\sqrt{2}\theta(x, \tau)+\theta \left( \frac{x}{2}, \frac{\tau}{2^{2}} \right)}{\sqrt{2}\theta_{\frac{1}{2},0}(x, \tau)+\theta \left( \frac{x+1}{2}, \frac{\tau}{2^{2}} \right)}=\sqrt{2}+1.
		\end{split}
	\end{align}
	The second equation is obtained from the non-degeneracy of the eigenvalue $-1$.
\end{proof}

\begin{lemma} \label{s3} Following identities hold for $n=4.$
	\begin{enumerate}
		\item $2[\theta(4x, 16\tau)+\theta_{\frac{1}{2},0}(4x, 16\tau)]= \theta \big( x, \tau \big)+\theta_{0,\frac{1}{2}}(x, \tau),$
		
		\item $
		2\theta(4x, 16\tau)+[\theta_{\frac{1}{4},0}(4x, 16\tau)+\theta_{\frac{3}{4},0}(4x, 16 \tau)]
		=  \theta ( x, \tau )+ 1/2[\theta_{0,\frac{1}{4}} (x, \tau)+ \theta_{0,\frac{3}{4}} (x, \tau )].
		$
	\end{enumerate}
\end{lemma}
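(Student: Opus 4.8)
The plan is to mimic the strategy of Lemma \ref{L1}: exploit a non-degenerate eigenvalue of the $n=4$ DFT matrix $A$, equate Matveev's theta-function eigenvector with the explicit numerical eigenvector of $A$, and read off the resulting component relations. For $n=4$ the eigenvalues are $\{1,-1,i,-i\}$ with multiplicities $[(n+4)/4]=2$, $[(n+2)/4]=1$, $[(n+1)/4]=1$, $[(n-1)/4]=0$. The eigenvalue $-1$ is therefore non-degenerate, so its eigenspace is one-dimensional, and I expect both identities to drop out of this single eigenspace.

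First I would solve $Av=-v$ directly for the matrix $A_{jk}=\tfrac12 i^{jk}$. Since $A^2$ is the reversal permutation, every $(\pm1)$-eigenvector is reversal-symmetric ($v_1=v_3$), and the resulting small linear system yields, up to scale, the explicit eigenvector $v=(-1,1,1,1)^{T}$. Next I would take Matveev's eigenvector $G(x,\tau,1,k)$ with $k=2$ (so that $i^{k}=-1$) from Theorem \ref{matveev} and evaluate it at the point $(4x,16\tau)$, which is legitimate since the theorem holds for every $\tau$ with $\mathrm{Im}\,\tau>0$. With $n=4$, $\nu=1$ one has $n^{2\nu}=16$ and $\tfrac1{\sqrt n}=\tfrac12$, and the translation arguments collapse: $(j+4x)/4=x+j/4$ and $(4x-j)/4=x-j/4$, which combine with $\theta(x+b,\tau)=\theta_{0,b}(x,\tau)$ and the period-one behaviour of the characteristics. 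Substituting $(-1)^2=1$ and $(-i)^2=(-i)^6=-1$, I would simplify the four components to
\begin{align*}
G_0&=2\theta(4x,16\tau)-\theta(x,\tau),\\
G_1&=\theta_{1/4,0}(4x,16\tau)+\theta_{3/4,0}(4x,16\tau)-\tfrac12\big[\theta_{0,1/4}(x,\tau)+\theta_{0,3/4}(x,\tau)\big],\\
G_2&=2\theta_{1/2,0}(4x,16\tau)-\theta_{0,1/2}(x,\tau),\\
G_3&=G_1,
\end{align*}
the last equality confirming the reversal symmetry expected of a $(-1)$-eigenvector.

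Finally, non-degeneracy forces the evaluated vector $G(x,\tau,1,2)$ to be a scalar multiple of $(-1,1,1,1)^{T}$, equivalently every $2\times2$ minor of $[\,G,\,(-1,1,1,1)^{T}\,]$ vanishes. The minor on rows $\{0,2\}$ gives $G_0=-G_2$, which rearranges immediately to identity (1); the minor on rows $\{0,1\}$ gives $G_0=-G_1$, which rearranges to identity (2). I expect the only genuine work to lie in the bookkeeping of the simplification step — tracking the factor $\tfrac12$, the powers of $-i$, and the periodicity reductions $\theta_{-1/4,0}=\theta_{3/4,0}$, $\theta_{0,-1/4}=\theta_{0,3/4}$, and $\theta_{0,-1/2}=\theta_{0,1/2}$ — rather than in any conceptual difficulty. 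It is worth noting that the other non-degenerate eigenvalue $i$ yields only the trivial proportionality $v\propto(0,1,0,-1)^{T}$ and contributes no identity, so the whole lemma rests on the one-dimensional $(-1)$-eigenspace.
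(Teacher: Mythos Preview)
Your proposal is correct and follows precisely the approach the paper intends: the paper does not spell out a separate proof of Lemma~\ref{s3}, but the method is that of Lemma~\ref{L1}, namely pairing Matveev's theta eigenvector from Theorem~\ref{matveev} with the explicit numerical eigenvector of the non-degenerate eigenvalue and reading off the vanishing $2\times2$ minors. Your choice of the eigenvalue $-1$ (the only useful non-degenerate one for $n=4$), the substitution $(x,\tau)\mapsto(4x,16\tau)$, and the component simplifications are all accurate; the observation that the eigenvalue $i$ contributes nothing is a nice addendum.
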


\begin{remark}
	The first identity of the lemma \ref{s3} corresponds to the "Rogers-Ramanujan identity" (see lemma \ref{R-R}).
\end{remark}

For the DFT of order $5,6,7,8$, the eigenvalue $-i$ is non-degenerate. So, by choosing appropriate minors of order $2$ of the matrix of eigenfunctions of DFT we can have the identity given below.

\begin{lemma}
	Following identities holds for $n=5,6,7,8.$ 
	\begin{equation*}
		\begin{split}
			& [\theta_{1/n,0}(nx, n^2 \tau)-\theta_{-1/n,0}(nx, n^2 \tau)
			+\frac{1}{\sqrt{n}}[ i\theta_{0,1/n} (x,\tau) -i\theta_{0,-1/n} (x,\tau )]]\\
			=& [\theta_{2/n,0}(nx, n^2 \tau)-\theta_{-2/n,0}(nx, n^2 \tau) +\frac{1}{\sqrt{n}}[ i\theta_{0,2/n} (x,\tau )- i\theta_{0,-2/n} (x,\tau )]] \\
			&(\sin{(2\pi/n)}-{\sqrt{n}/{2}}) /\sin{(4\pi/n)}.
		\end{split}
	\end{equation*}
\end{lemma}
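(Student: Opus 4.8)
The plan is to follow exactly the same strategy that proved Lemmas \ref{L1} and \ref{s3}: exploit the non-degeneracy of a DFT eigenvalue. The statement itself tells us the key structural fact --- for $n=5,6,7,8$ the eigenvalue $-i$ has multiplicity one. By Theorem \ref{matveev}, the vector $G(x,\tau,1,k)$ with $i^k=-i$ (i.e. $k=3$) is an eigenvector for $-i$, and its $j$-th component is
\begin{equation*}
G_j(k=3)=\theta_{(j/n,0)}(x,\tau)-\theta_{(-j/n,0)}(x,\tau)+\tfrac{1}{\sqrt{n}}\bigl[i\,\theta_{0,j/n}(x,\tau)-i\,\theta_{0,-j/n}(x,\tau)\bigr],
\end{equation*}
after using $(-i)^3=i$ and $(-i)^9=(-i)^1$ and rewriting the translated pure theta functions $\theta\bigl(\frac{j\pm x}{n},\frac{\tau}{n^2}\bigr)$ in characteristic form. (One should check the bookkeeping: the scaling $\theta(nx,n^2\tau)$ versus $\theta((j+x)/n,\tau/n^2)$ in the statement suggests the authors have applied a change of variable $x\mapsto nx$, $\tau\mapsto n^2\tau$ throughout, which is harmless since the identity holds for all admissible $x,\tau$.) First I would write down $G(3)$ explicitly and identify its first two components (taking $j=1$ and $j=2$) with the two bracketed quantities appearing on the left- and right-hand sides of the claimed identity.

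The crucial observation is that non-degeneracy alone does \emph{not} immediately give a relation between two \emph{components} of a single eigenvector; Lemmas \ref{L1} and \ref{s3} obtained relations by setting $\det[v_1,v_2]=0$ for two \emph{distinct} eigenvectors $v_1,v_2$ sharing the same simple eigenvalue. So the genuine content here must be that there is a \emph{second}, explicitly known eigenvector for $-i$ --- just as $v_2$ in Lemma \ref{L1} was a constant vector. Indeed, since $A$ has an eigenvector for $-i$ whose entries are the theta expressions, and since we can also produce a simple closed-form eigenvector for $-i$ from the structure of $A$ (for small $n$ the $-i$-eigenspace is spanned by an elementary vector whose entries involve $\sin(2\pi j/n)$, as the factors $\sin(2\pi/n)$, $\sin(4\pi/n)$ and $\sqrt{n}/2$ in the statement strongly indicate), the two eigenvectors must be proportional. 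The plan is therefore to exhibit this second eigenvector $w$ with $Aw=-iw$, compute the ratio of its first two coordinates, and get $\tfrac{\sin(2\pi/n)-\sqrt{n}/2}{\sin(4\pi/n)}$ as that ratio; then the vanishing of the $2\times 2$ minor formed by the $j=1,2$ rows of $[G(3),\,w]$ yields exactly the stated proportionality.

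Concretely, the steps are: (i) specialize Theorem \ref{matveev} to $k=3$, $\nu=1$ and record the $j=1$ and $j=2$ components, recognizing them as the two bracketed expressions in the claim; (ii) produce the elementary closed-form eigenvector $w$ for eigenvalue $-i$, using the known action of $A$ (recall $A^2_{jk}=\delta_{j+k\equiv0}$, so $A$ permutes-and-scales coordinates in a way that makes the $-i$-eigenvector computable from roots of unity), and read off its first two entries; (iii) invoke simplicity of the eigenvalue $-i$ for $n\in\{5,6,7,8\}$ to conclude $G(3)$ and $w$ are parallel, hence
\begin{equation*}
\frac{G_1(3)}{G_2(3)}=\frac{w_1}{w_2}=\frac{\sin(2\pi/n)-\sqrt{n}/2}{\sin(4\pi/n)},
\end{equation*}
which rearranges into the displayed identity.

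The main obstacle I anticipate is step (ii): pinning down the explicit elementary eigenvector $w$ and verifying $Aw=-iw$, since the $-i$-eigenspace of the DFT is the most delicate (it has the smallest multiplicity, $[(n-1)/4]$, and exists only once $n\ge5$). The trigonometric ratio $\bigl(\sin(2\pi/n)-\sqrt{n}/2\bigr)/\sin(4\pi/n)$ is not a ratio one expects from pure roots of unity alone; the $\sqrt{n}/2$ term signals that $w$ is itself a combination of a Gauss-sum-type vector and a sine vector, so constructing it correctly --- and checking it lands in the $-i$-eigenspace rather than some other --- is where the real work lies. Once $w$ is correctly identified, steps (i) and (iii) are routine bookkeeping and an appeal to simplicity, exactly as in the preceding lemmas.
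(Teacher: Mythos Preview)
Your proposal is correct and matches the paper's approach exactly; in fact the paper gives no proof for this lemma beyond the one-sentence hint preceding it (``the eigenvalue $-i$ is non-degenerate \dots\ by choosing appropriate minors of order~$2$''), so you have supplied more detail than the authors. Your anticipated obstacle in step~(ii) dissolves once you take $w=P_{-i}e_1=\tfrac14(I+iA-A^2-iA^3)e_1$, whose $k$-th component works out to $\tfrac14\bigl(\delta_{k,1}-\delta_{k,n-1}-\tfrac{2}{\sqrt n}\sin(2\pi k/n)\bigr)$, giving $w_1/w_2=\bigl(\sin(2\pi/n)-\sqrt n/2\bigr)/\sin(4\pi/n)$ immediately.
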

\begin{remark}
	All the identities proved in this section also holds for generalized theta functions $\theta(x, \tau, \nu)$ for any $\nu \in \mathbb{N}$.
\end{remark}
\section{Classical identities and Landen transformation}
In this section, we first give an alternative proof of the classical fourth-order identities of theta functions using the DFT and elementary linear algebra. We then give extensions of the Landen transformation for higher values of $n.$

\par Following identities are derived from the product of Theta functions.

\begin{lemma}\label{Lemma1}
	
	\begin{equation*}
		\begin{aligned}
			1.&~ \theta(x, \tau)\theta_{0, 1/2}(x, \tau)=\theta_{0,1/2}(2x, 2\tau)\theta_{0,1/2}(0, 2\tau),\\
			2.&~\theta_{0,1/2}(0,\tau)\theta(x,\tau)=\theta^2_{0,1/2}(x,2\tau)-\theta^2_{1/2,1/2}(x,2\tau),\\
			3.&~ \theta(x, \tau)\theta_{0,1/3}(x, \tau)=\theta_{0,1/3}(2x, 2\tau)\theta_{0,-1/3}(0, 2\tau)+\theta_{1/2,1/3}(2x, 2\tau)\theta_{1/2,-1/3}(0, 2\tau),\\
			4.&~ \theta(x, \tau)\theta_{0,-1/3}(x, \tau)
			=\theta_{0,-1/3}(2x, 2\tau)\theta_{0,1/3}(0, 2\tau)+\theta_{1/2,-1/3}(2x, 2\tau)\theta_{1/2,1/3}(0, 2\tau),\\
			5.&~ \theta(x, \tau)\theta_{1/3,0}(x, \tau)
			=\theta_{1/6,0}(2x, 2\tau)\theta_{-1/6,0}(0, 2\tau)+\theta_{2/3,0}(2x, 2\tau)\theta_{1/3,0}(0, 2\tau).\\
		\end{aligned}
	\end{equation*}
\end{lemma}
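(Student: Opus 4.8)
The five identities share one structure: each left side is a product of two theta series with common modulus $\tau$, while each right side is a sum of products of theta series with doubled modulus $2\tau$. This is the signature of the classical \emph{doubling} (Landen) manipulation, and since the surrounding sentence asserts these are ``derived from the product of Theta functions,'' the plan is to prove all five by a single elementary lattice substitution rather than by the DFT. Throughout I abbreviate $q=e^{\pi i\tau}$ and $z=e^{2\pi i x}$, so that $\theta_{a,b}(x,\tau)=\sum_{m}q^{(m+a)^2}z^{m+a}e^{2\pi i(m+a)b}$ and $\theta(x,\tau)=\sum_m q^{m^2}z^m$.

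The engine is the following. Writing a product of two such series as a double sum over $(m,n)\in\mathbb Z^2$, I substitute $s=m+n,\ d=m-n$, which range over pairs of equal parity, and use
\[
m^2+n^2=\tfrac12\bigl(s^2+d^2\bigr),
\]
so that every modulus-$\tau$ series is converted into a modulus-$2\tau$ series. I then split the lattice according to the parity of $s$; within each parity class the exponent separates into a function of $s$ times a function of $d$, and each factor is recognized as a modulus-$2\tau$ theta series. The distribution of the original $z$-powers dictates the output arguments: when only one input carries $z$ the variable splits as argument $x$ in both output factors, whereas when the product carries $z^{m+n}=z^{s}$ it lands entirely in the $s$-factor at argument $2x$, the $d$-factor then being a constant-argument series.

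I would first run this on identity~1 as the model. There the characteristic $b=\tfrac12$ of the second factor inserts $(-1)^n$, the product carries $z^{s}$, and the even class $s=2r$ reproduces exactly $\theta_{0,1/2}(2x,2\tau)\,\theta_{0,1/2}(0,2\tau)$; the odd class $s=2r+1$ is sent to its own negative by a reflection of the summation index and therefore vanishes, leaving a single term. Identity~2 is the mirror situation: here only $\theta(x,\tau)$ carries $z$, so both output factors have argument $x$; the even class gives $\theta_{0,1/2}^2(x,2\tau)$ while the odd class now \emph{survives} and reassembles, after matching the sign prefactor, as $-\theta_{1/2,1/2}^2(x,2\tau)$, producing the stated difference of two squares.

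Finally, identities~3--5 are the same computation with the weight being a cube root of unity $e^{\pm2\pi i/3}$ (for the characteristics $b=\pm\tfrac13$) or with an index shift $a=\tfrac13$ (for $\theta_{1/3,0}$). The new feature is that neither parity class cancels, so the even class yields the first product on the right and the odd class yields the second, with the identity $\theta_{c,0}(0,2\tau)=\theta_{-c,0}(0,2\tau)$ used to match the stated characteristics $\pm\tfrac16,\ \tfrac23$, etc. I expect the main obstacle to be purely bookkeeping: correctly propagating the half-integer and third-integer shifts of the summation index (and the accompanying root-of-unity phases) through the change of variables so that each surviving class lands on the precise $\theta_{a,b}$ label, and verifying in the $\tfrac12$-cases that the non-contributing parity class genuinely cancels under the reflection. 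These steps are elementary but error-prone, so I would confirm each identity by comparing $q$-expansions to low order as a sanity check.
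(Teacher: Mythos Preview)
Your approach is correct and is precisely the implicit argument the paper has in mind: the paper does not actually supply a proof of this lemma, it only prefaces it with the single phrase ``Following identities are derived from the product of Theta functions,'' treating all five as standard. Your doubling substitution $s=m+n$, $d=m-n$ with the parity split is exactly the elementary series manipulation that phrase points to, and your case analysis (vanishing odd class in~(1), surviving odd class in~(2), and two non-vanishing classes with the $\tfrac13$-shifts in~(3)--(5)) is accurate; in particular your remark that $\theta_{c,0}(0,2\tau)=\theta_{-c,0}(0,2\tau)$ is what reconciles the labels in~(5).
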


The classical null identity for Theta functions (see page 127, \cite{mckean1999elliptic}) is
\begin{equation}\label{null}
	\theta^4_{1/2,0}(0,\tau)+\theta^4_{0,1/2}(0,\tau)=\theta^4(0,\tau).
\end{equation}
This null identity is traditionally proved by considering the analytic properties of Theta functions including the order of zeros and periodicity and the quasi periodicity.
\par A new proof for the fourth-order classical identities of theta functions is given in the next lemma. In this proof, non-degeneracy of the eigenvalues and corresponding eigenvectors of the DFT for $n=2$ and elementary linear algebra is used. 
\begin{lemma} \label{classical}The following functional equations hold
	\begin{enumerate}
		\item $2^{nd}$ order :  
		$	\theta^2(x,2\tau)-\theta^2_{1/2,0}(x,2\tau)=\theta_{0,1/2}(x,\tau)\theta_{0,1/2}(0,\tau).$
		\item $4^{th}$ order classical theta function identities:
		\begin{itemize}
			\item $\theta^2(x,\tau)\theta^2(0,\tau)=\theta^2_{1/2,0}(x,\tau) \theta^2_{1/2,0}(0,\tau)+\theta^2_{0,1/2}(x,\tau) \theta^2_{0,1/2}(0,\tau),$
			\item $\theta^2(x,\tau)\theta^2_{1/2,0}(0,\tau)=\theta^2_{1/2,1/2}(x,\tau)\theta^2_{0,1/2}(0,\tau)+\theta^2_{1/2,0}(x,\tau)\theta^2_{0,1/2}(0,\tau).$
		\end{itemize}  
	\end{enumerate}
	
\end{lemma}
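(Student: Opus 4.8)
The plan is to deduce all three equations from a short list of degree-two (``second order'') theta identities of the type produced by the simple eigenvalues of the $n=2$ DFT (the mechanism of Lemmas \ref{L1} and \ref{Lemma1}), and then to read off the fourth-order relations by a single difference-of-two-squares step. For part~1 I would use the $n=2$ eigenvector calculation: the eigenvalues $\pm1$ are simple, so the theta-valued eigenvectors furnished by Theorem \ref{matveev} are proportional, and a suitable $2\times2$ determinant of their components, after the argument/modulus rescalings in \eqref{eq:mv}, yields
\[
\theta^{2}(x,2\tau)-\theta_{1/2,0}^{2}(x,2\tau)=\theta_{0,1/2}(x,\tau)\,\theta_{0,1/2}(0,\tau).
\]
As an independent check this also drops out of the defining series: writing the right side as $\sum_{m,n}(-1)^{m+n}e^{\pi i\tau(m^{2}+n^{2})+2\pi imx}$ and splitting according to the parity of $m+n$ gives $\theta^{2}(x,2\tau)$ from the even part and $-\theta_{1/2,0}^{2}(x,2\tau)$ from the odd part.

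By the same mechanism I would record two companion second-order identities,
\[
\theta(x,\tau)\,\theta(0,\tau)=\theta^{2}(x,2\tau)+\theta_{1/2,0}^{2}(x,2\tau),\qquad \theta_{1/2,0}(x,\tau)\,\theta_{1/2,0}(0,\tau)=2\,\theta(x,2\tau)\,\theta_{1/2,0}(x,2\tau),
\]
the first being the ``plus'' partner of part~1 and the second a duplication of the $\theta_{1/2,0}\theta_{1/2,0}$ product; both are of Lemma \ref{Lemma1} type and again follow from the same $n=2$ determinants or from the parity splitting. Setting $P=\theta(x,\tau)\theta(0,\tau)$, $R=\theta_{0,1/2}(x,\tau)\theta_{0,1/2}(0,\tau)$ and $S=\theta_{1/2,0}(x,\tau)\theta_{1/2,0}(0,\tau)$, these read $P+R=2\theta^{2}(x,2\tau)$, $P-R=2\theta_{1/2,0}^{2}(x,2\tau)$ and $S=2\theta(x,2\tau)\theta_{1/2,0}(x,2\tau)$, whence
\[
P^{2}-R^{2}=(P+R)(P-R)=4\,\theta^{2}(x,2\tau)\,\theta_{1/2,0}^{2}(x,2\tau)=S^{2}.
\]
This is exactly the first fourth-order identity $\theta^{2}(x,\tau)\theta^{2}(0,\tau)=\theta_{1/2,0}^{2}(x,\tau)\theta_{1/2,0}^{2}(0,\tau)+\theta_{0,1/2}^{2}(x,\tau)\theta_{0,1/2}^{2}(0,\tau)$, and putting $x=0$ recovers the classical null identity \eqref{null}.

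For the second fourth-order identity I would transport the first one by the half-period shift $x\mapsto x+\tau/2$, via the elementary rule $\theta_{a,b}(x+\tau/2,\tau)=e^{-\pi i\tau/4-\pi i(x+b)}\theta_{a+1/2,b}(x,\tau)$, which permutes the $x$-dependent factors ($\theta\mapsto\theta_{1/2,0}$, $\theta_{1/2,0}\mapsto\theta$, $\theta_{0,1/2}\mapsto\theta_{1/2,1/2}$) while fixing the constants $\theta_{\bullet}(0,\tau)$. Each degree-four term then acquires the same modulus factor $e^{-\pi i\tau/2-2\pi ix}$ up to the sign contributed by the characteristic $b$; dividing out this common factor and rearranging so that $\theta^{2}(x,\tau)\theta_{1/2,0}^{2}(0,\tau)$ stands alone absorbs the sign from the $\theta_{0,1/2}$-term and produces the second identity.

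I expect the real work to be in the first step: manufacturing the three second-order building blocks from Theorem \ref{matveev}, that is, selecting the correct $2\times2$ minors of the eigenvector matrix and reading off the characteristics $(a,b)$ after the argument-and-modulus rescalings in \eqref{eq:mv}. Once these are secured the first fourth-order identity is pure algebra. The remaining, bookkeeping-level difficulty is in the passage to the second identity, where one must track the half-integer shifts of the characteristics and check that the exponential prefactor cancels uniformly across all four degree-four products rather than leaving a spurious sign.
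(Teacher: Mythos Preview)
Your proposal is correct, and for part~1 it coincides with the paper's argument (the $n=2$ DFT eigenvector identity $A(v_1+v_2)=v_1-v_2$, then multiply the two components and invoke Lemma~\ref{Lemma1}(1)). For the fourth-order identities, however, you take a genuinely different and somewhat cleaner route. The paper obtains the ``plus'' companion $\theta(x,\tau)\theta(0,\tau)=\theta^2(x,2\tau)+\theta_{1/2,0}^2(x,2\tau)$ by applying $\tau\mapsto\tau+1$ to part~1, multiplies it against the null specialisation of part~1, simplifies via identities (1)--(2) of Lemma~\ref{Lemma1}, and then applies a second modular shift $\tau\mapsto\tau+2$; this yields a pair of equations $C=D$ and $C=-D$, forcing $C=D=0$ and delivering both fourth-order identities simultaneously. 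You instead adjoin the duplication $\theta_{1/2,0}(x,\tau)\theta_{1/2,0}(0,\tau)=2\theta(x,2\tau)\theta_{1/2,0}(x,2\tau)$ as a third second-order building block and read off the first fourth-order identity in one line via $(P+R)(P-R)=S^2$; the second you then get by the elliptic shift $x\mapsto x+\tau/2$ rather than a further modular shift. Your argument is shorter and uses only one auxiliary identity beyond what the paper's route needs, at the cost of treating the two fourth-order identities sequentially rather than in a single stroke. One caution: your half-period shift produces $\theta^2(x,\tau)\theta_{1/2,0}^2(0,\tau)=\theta_{1/2,1/2}^2(x,\tau)\theta_{0,1/2}^2(0,\tau)+\theta_{1/2,0}^2(x,\tau)\theta^2(0,\tau)$, which is the classical identity; the version printed in the lemma statement has $\theta_{0,1/2}^2(0,\tau)$ in the last term, which fails already at $x=0$ and is evidently a typographical slip there.
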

\begin{proof}
	To prove this lemma, consider the DFT for $n=2.$ Let $A$ be the DFT matrix of size 2, and $v_1, v_2$ be the eigenvectors corresponding to eigenvalues $1, -1$ respectively given by the Theorem \ref{matveev}. So, it is clear that 
	\begin{equation}
		A(v_1+v_2)= v_1-v_2.
	\end{equation} 
	Also,  
	\begin{equation*}
		A(v_1+v_2)= 2\sqrt{2}\begin{pmatrix}
			\theta(x, \tau)+\theta_{1/2,0}(x,\tau)\\
			\theta(x, \tau)-\theta_{1/2,0}(x,\tau)
		\end{pmatrix}, ~~\text{and} ~~
		v_1-v_2=2\sqrt{2} \begin{pmatrix}
			\theta (x/2,\tau/4)\\
			\theta_{0,1/2} (x/2,\tau/4)
		\end{pmatrix}.
	\end{equation*}
	\par Now equate the first and second entries of both sides and take their product, we will have,
	\begin{equation}\label{n1}
		\theta^2(x, \tau)-\theta^2_{1/2,0}(x, \tau)=\theta(x/2,\tau/4)\theta_{0,1/2} (x/2,\tau/4).
	\end{equation}
	Using identity (1) of lemma (\ref{Lemma1}) and equation (\ref{n1}), 
	\begin{equation}\label{New1}
		\theta^2(x,2\tau)-\theta^2_{1/2,0}(x,2\tau)=\theta_{0,1/2}(x,\tau)\theta_{0,1/2}(0,\tau).
	\end{equation}
	Evaluation at $x=0$ gives the following null identity:
	\begin{equation}\label{New2}
		\theta^2(0,2\tau)-\theta^2_{1/2,0}(0,2\tau)=\theta^2_{0,1/2}(0,\tau).
	\end{equation}
	The transformation of the equation (\ref{New1}) under $\tau \rightarrow \tau_1=\tau +1 $ using
	\begin{enumerate}
		\item $\theta(x,2\tau_1)=\theta(x,2\tau),$
		\item $\theta_{1/2,0}(x,2\tau_1)=i\theta_{1/2,0}(x,2\tau),$
		\item $\theta_{0,1/2}(x,\tau_1)=\theta(x,\tau).$
	\end{enumerate}
	is
	\begin{equation}\label{New3}
		\theta^2(x,2\tau)+\theta^2_{1/2,0}(x,2\tau)=\theta(x,\tau)\theta(0,\tau).
	\end{equation}
	From equations (\ref{New2}) and (\ref{New3}), we can write
	\begin{equation}\label{New4}
		\begin{split}
			&\theta^2(x,2\tau)\theta^2(0,2\tau)-\theta^2_{1/2,0}(x,2\tau) \theta^2_{1/2,0}(0,2\tau)\\
			&+ \theta^2_{1/2,0}(x,2\tau)\theta^2(0,2\tau)-\theta^2_{1/2,0}(0,2\tau)\theta^2(x,2\tau)\\
			=&\theta^2_{0,1/2}(0,\tau)\theta(x,\tau)\theta(0,\tau).
		\end{split}  
	\end{equation}
	Using Lemma \ref{Lemma1} we obtain,
	\begin{equation}\label{New5}
		\begin{split}
			&\theta^2(x,\tau)\theta^2(0,\tau)-\theta^2_{1/2,0}(x,\tau) \theta^2_{1/2,0}(0,\tau)-\theta^2_{0,1/2}(x,\tau) \theta^2_{0,1/2}(0,\tau)\\
			&= \theta^2(x,\tau)\theta^2_{1/2,0}(0,\tau)-\theta^2_{1/2,1/2}(x,\tau)\theta^2_{0,1/2}(0,\tau)-\theta^2_{1/2,0}(x,\tau)\theta^2_{0,1/2}(0,\tau).
		\end{split}  
	\end{equation}
	Now consider the transformation $\tau \rightarrow \tau_2=\tau+2.$
	\begin{enumerate}
		\item $\theta(x,\tau_2)=\theta(x,\tau), ~\text{and}~~\theta(0,\tau_2)=\theta(0,\tau),$ 
		\item $\theta_{1/2,0}(x,\tau_2)=i\theta_{1/2,0}(x,\tau), ~\text{and}~~\theta_{1/2,0}(0,\tau_2)=i\theta_{1/2,0}(0,\tau),$
		\item $\theta_{0,1/2}(x,\tau_2)=\theta_{0,1/2}(x,\tau),~\text{and}~~\theta_{0,1/2}(0,\tau_2)=\theta_{0,1/2}(0,\tau),$
		\item $\theta_{1/2,1/2}(x,\tau_2)=i\theta_{1/2,1/2}(x,\tau),~\text{and}~~\theta_{1/2,1/2}(0,\tau_2)=i\theta_{1/2,1/2}(0,\tau).$
	\end{enumerate}
	The equation (\ref{New5}) transformed to
	\begin{equation}\label{New6}
		\begin{split}
			&\theta^2(x,\tau)\theta^2(0,\tau)-\theta^2_{1/2,0}(x,\tau) \theta^2_{1/2,0}(0,\tau)-\theta^2_{0,1/2}(x,\tau) \theta^2_{0,1/2}(0,\tau)\\
			&= -\theta^2(x,\tau)\theta^2_{1/2,0}(0,\tau)+\theta^2_{1/2,1/2}(x,\tau)\theta^2_{0,1/2}(0,\tau)+\theta^2_{1/2,0}(x,\tau)\theta^2_{0,1/2}(0,\tau).
		\end{split}  
	\end{equation}
	The equations (\ref{New5}) and (\ref{New6}) are of the form $C=D$ and $C=-D$. Therefore $C=0,~\text{and}~ D=0.$
	The following 4th-degree equations are obtained
	\begin{equation}
		\begin{split}
			\theta^2(x,\tau)\theta^2(0,\tau)=&\theta^2_{1/2,0}(x,\tau) \theta^2_{1/2,0}(0,\tau)+\theta^2_{0,1/2}(x,\tau) \theta^2_{0,1/2}(0,\tau),\\
			\theta^2(x,\tau)\theta^2_{1/2,0}(0,\tau)=&\theta^2_{1/2,1/2}(x,\tau)\theta^2_{0,1/2}(0,\tau)+\theta^2_{1/2,0}(x,\tau)\theta^2_{0,1/2}(0,\tau).
		\end{split}
	\end{equation}
\end{proof}
\begin{remark}
	The classical null identities can be obtained by putting $x=0$ in the above lemma.
\end{remark}
Landen transformations are well studied in the literature. For example the Landen transformation in \cite{mckean1999elliptic}(page 150),in our notation is
\begin{align}\label{l10}
	\begin{aligned}
		\theta_{1/2,1/2}(2x,2\tau)&=\frac{\theta_{1/2,1/2}(0,\tau)\theta_{1/2,0}(x,\tau)}{\theta_{0,1/2}(0, 2\tau)},\\
		~~~~\text{and}~~~
		\theta^2(0,2\tau)&=\frac{1}{2}[\theta^2(0,\tau)+\theta^2_{0,1/2}(0, \tau)].
	\end{aligned}
\end{align}
Note that these identities involve doubling the quasi-period $\tau$. We now show that Landen transformations for even $n$ relate theta functions with quasi period $\tau$ to theta functions with quasi period $\frac{n^2}{2}\tau.$ For $n=2$, this involves doubling the quasi period. The following lemma justifies this assertion.
\begin{lemma}\label{ldn}
	Following Landen transformation holds 
	\begin{enumerate}
		\item For $n=3$
		\begin{equation*}
			\begin{split}
				&2\theta^2(3x, 9\tau)- [\theta_{1/3,0}(3x, 9\tau)+\theta_{-1/3,0}(3x, 9\tau) ]^2\\
				= &\theta_{0,1/3}(2x, 2\tau)\theta_{0,-1/3}(0, 2\tau)+\theta_{1/2,1/3}(2x, 2\tau)\theta_{1/2,-1/3}(0, 2\tau)\\
				&+ \theta_{0,-1/3}(2x, 2\tau)\theta_{0,1/3}(0, 2\tau)+\theta_{1/2,-1/3}(2x, 2\tau)\theta_{1/2,1/3}(0, 2\tau)\\
				&-\left[\theta_{1/6,0}(6x, 18\tau)\theta_{-1/6,0}(0, 18\tau)+\theta_{2/3,0}(6x, 18\tau)\theta_{1/3,0}(0, 18\tau)\right]\\
				&-\left[\theta_{-1/6,0}(6x, 18\tau)\theta_{1/6,0}(0, 18\tau)+\theta_{1/3,0}(6x, 18\tau)\theta_{2/3,0}(0, 18\tau)\right]
			\end{split}
		\end{equation*}	
		\item For $n=4$
		\begin{equation*}\label{l4}
			\begin{split}
				\big[ \theta(2x, 8\tau) + \theta_{1/2,0}(2x, 8\tau) \big]^2 - \big[ \theta_{1/4,0}(2x, 8\tau) + \theta_{3/4,0}(2x, 8\tau) \big]^2 
				=\theta_{0,1/2}(x, \tau)\theta_{0,1/2}(0, \tau).
			\end{split}		
		\end{equation*}
	\end{enumerate}
\end{lemma}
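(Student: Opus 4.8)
The plan is to reduce both Landen transformations to the product identities already recorded in Lemma \ref{Lemma1}, using only the elementary \emph{dissection} (multiplication) formula
\[
\sum_{k=0}^{n-1}\zeta^{jk}\,\theta_{k/n,0}(X,T)=\theta\!\Big(\tfrac{X+j}{n},\tfrac{T}{n^{2}}\Big),\qquad \zeta=e^{2\pi i/n},
\]
which follows instantly by regrouping the defining series (set $r=m+k/n$, $s=nr$) and is precisely the linear relation between the two families of theta functions encoded in the DFT eigenvector of Theorem \ref{matveev}.

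For $n=4$ I would start from the factorization $(\alpha+\beta)^2-(\gamma+\delta)^2=(\alpha+\beta+\gamma+\delta)(\alpha+\beta-\gamma-\delta)$ applied to the left-hand side, writing $\alpha=\theta(2x,8\tau)$, $\beta=\theta_{1/2,0}(2x,8\tau)$, $\gamma=\theta_{1/4,0}(2x,8\tau)$, $\delta=\theta_{3/4,0}(2x,8\tau)$, i.e.\ the four characteristics $k/4$, $k=0,1,2,3$, at level $(2x,8\tau)$. The dissection formula (with $j=0$, and with $j=2$ so that $\zeta^{2k}=(-1)^k$) then gives
\[
\alpha+\beta+\gamma+\delta=\theta\big(\tfrac{x}{2},\tfrac{\tau}{2}\big),\qquad \alpha+\beta-\gamma-\delta=\theta_{0,1/2}\big(\tfrac{x}{2},\tfrac{\tau}{2}\big),
\]
so the left-hand side collapses to $\theta(\tfrac x2,\tfrac\tau2)\,\theta_{0,1/2}(\tfrac x2,\tfrac\tau2)$. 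Finally I would invoke Lemma \ref{Lemma1}(1) with $x\mapsto x/2,\ \tau\mapsto\tau/2$, which turns this product into $\theta_{0,1/2}(x,\tau)\theta_{0,1/2}(0,\tau)$, exactly the claimed right-hand side.

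For $n=3$ the first move is to recognise the right-hand side as an assembly of Lemma \ref{Lemma1}: the two level-$(2x,2\tau)$ lines are the right-hand sides of parts (3) and (4), hence equal $\theta(x,\tau)\theta_{0,1/3}(x,\tau)+\theta(x,\tau)\theta_{0,-1/3}(x,\tau)$, while the two level-$(6x,18\tau)$ lines are the right-hand side of part (5) (and its $1/3\mapsto-1/3$ mirror) evaluated at $(3x,9\tau)$, hence equal $\theta(3x,9\tau)\big[\theta_{1/3,0}(3x,9\tau)+\theta_{-1/3,0}(3x,9\tau)\big]$. Writing $P=\theta(3x,9\tau)$ and $S=\theta_{1/3,0}(3x,9\tau)+\theta_{-1/3,0}(3x,9\tau)$, the identity to be proved becomes $2P^{2}-S^{2}=\theta(x,\tau)\big[\theta_{0,1/3}(x,\tau)+\theta_{0,-1/3}(x,\tau)\big]-PS$.

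To finish I would feed in the $n=3$ dissection relations $\theta(x,\tau)=P+R_1+R_2$ and $\theta_{0,\pm1/3}(x,\tau)=P+\omega^{\pm1}R_1+\omega^{\mp1}R_2$ (with $\omega=e^{2\pi i/3}$, $R_1=\theta_{1/3,0}(3x,9\tau)$, $R_2=\theta_{-1/3,0}(3x,9\tau)$, so $S=R_1+R_2$). Since $\omega+\omega^{2}=-1$, these give $\theta_{0,1/3}(x,\tau)+\theta_{0,-1/3}(x,\tau)=2P-S$, whence the right-hand side equals $(P+S)(2P-S)=2P^{2}+PS-S^{2}$, which is exactly $2P^{2}-S^{2}+PS$, matching the left-hand side. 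I expect the main obstacle to be bookkeeping rather than conceptual: correctly matching every term of the long $n=3$ right-hand side with the appropriate instance of Lemma \ref{Lemma1} (tracking the characteristic reductions $\theta_{-j/n,0}=\theta_{(n-j)/n,0}$ and $\theta_{0,-1/3}=\theta_{0,2/3}$, and the substitution $(x,\tau)\mapsto(3x,9\tau)$ that sends level $2\tau$ to level $18\tau$), after which the algebra telescopes as shown.
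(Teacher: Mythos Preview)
Your proposal is correct and follows essentially the same route as the paper: the paper obtains the two linear relations $P+S=\theta(x,\tau)$ and $2P-S=\theta_{0,1/3}(x,\tau)+\theta_{0,-1/3}(x,\tau)$ (respectively the $n=4$ analogues) from the DFT eigenvector identity $A(v_1+v_2)=v_1-v_2$ of Theorem~\ref{matveev}, multiplies them, and then invokes Lemma~\ref{Lemma1}(3),(4),(5) (respectively Lemma~\ref{Lemma1}(1)). Your ``dissection formula'' is exactly the content of that eigenvector relation, as you note, so the only difference is cosmetic --- you state the linear relations directly and factor before dissecting, while the paper computes the DFT action explicitly; the substance and the appeal to Lemma~\ref{Lemma1} are identical.
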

\begin{proof}
	\begin{enumerate}
		\item 	Let us consider the DFT matrix $A$ for $n=3,$ and the eigenvectors corresponding to eigenvalue $1$ and $-1$ are $v_1$ and $v_2$ given in Theorem \ref{matveev}. It is clear that 
		\begin{equation}\label{Landen1}
			A(v_1+v_2)=v_1-v_2.
		\end{equation}
		\begin{equation}\label{Landen2}
			v_1 +v_2 = 
			\begin{pmatrix} 4\theta(x, \tau) \\
				2 \big[\theta_{1/3,0}(x, \tau)+\theta_{-1/3,0}(x, \tau)\big] \\
				2 \big[\theta_{2/3,0}(x, \tau)+\theta_{-2/3,0}(x, \tau)\big]\\
			\end{pmatrix},
			v_1 - v_2= \begin{pmatrix}
				\frac{4}{\sqrt{3}} \theta\big(\frac{x}{3},\frac{\tau}{9} \big)\\
				\frac{2}{\sqrt{3}} \big[ \theta \big(\frac{x+1}{3},\frac{\tau}{9} \big) + \theta \big(\frac{x-1}{3},\frac{\tau}{9} \big) \big]\\
				\frac{2}{\sqrt{3}} \big[ \theta \big(\frac{x+1}{3},\frac{\tau}{9} \big) + \theta \big(\frac{x-1}{3},\frac{\tau}{9} \big) \big]\\
			\end{pmatrix}.
		\end{equation} 
		Let $\alpha = \theta_{1/3,0}(x, \tau)+\theta_{-1/3,0}(x, \tau) =\theta_{2/3,0}(x, \tau)+\theta_{-2/3,0}(x, \tau),$
		\begin{equation}\label{Landen3}
			A(v_1 + v_2)= \frac{1}{\sqrt{3}}\begin{pmatrix}
				4\theta(x, \tau)+4 \alpha \\
				4\theta(x, \tau)-2 \alpha \\
				4\theta(x, \tau)-2 \alpha \\
			\end{pmatrix},
		\end{equation}
		Use (\ref{Landen2}) and (\ref{Landen3}) in the equation (\ref{Landen1}), we have,
		\begin{equation*}
			\begin{split}
				\theta(x, \tau)+ \alpha&=\theta(x/3,\tau/9),\\
				2\theta(x, \tau)- \alpha&= \theta_{0,1/3} (x/3,\tau/9 ) + \theta_{0,-1/3}(x/3,\tau/9).
			\end{split}
		\end{equation*}
		This leads to 
		\begin{equation}\label{L2.1}
			\begin{aligned}
				&2\theta^2(x, \tau)- [\theta_{1/3,0}(x, \tau)+\theta_{-1/3,0}(x, \tau) ]^2+\theta(x,\tau)[\theta_{1/3,0}(x, \tau)+\theta_{-1/3,0}(x, \tau)]\\
				=&\theta(x/3,\tau/9)[\theta_{0,1/3} (x/3,\tau/9) + \theta_{0,-1/3}(x/3,\tau/9)].
			\end{aligned}
		\end{equation}
		Identities (3),(4) and (5) of lemma \ref{Lemma1} and equation (\ref{L2.1}) proves the result.
		\item For the proof consider the eigenvectors $v_1$ and $v_2$ corresponding to eigenvalues $1, -1$ of the DFT of size $4$. $A(v_1+v_2)=v_1-v_2.$ We now calculate $A(v_1+v_2)$ and $v_1-v_2$ separately, and equate first and third entries. 
		The identity is obtained by using result (1) of lemma \ref{Lemma1}  and by replacing $(x,\tau)$ by $(2x,8\tau)$. 
	\end{enumerate}
	
\end{proof}
Again by putting $x=0,$ in the above lemma, generalization of the null identities can be obtained.
\begin{remark}   For any even $n\geq 6$, the Landen transformation is 
	\begin{equation*}
		\begin{split}
			\left( \sum_{k=0}^{n/2-1} \theta_{2k/n,0} \bigg( \frac{nx}{2},\frac{n^2 \tau}{2}\bigg)\right)^2-\left( \sum_{k=0}^{n/2-1} \theta_{(2k+1)/n,0} \bigg( \frac{nx}{2},\frac{n^2 \tau}{2}\bigg)\right)^2
			=\theta_{1/2,0}(x, \tau)\theta_{1/2,0}(0, \tau).
		\end{split}
	\end{equation*}

	In identity (2) of lemma \ref{ldn} the Theta function with quasi period $\tau$ on the right-hand side is related to the Theta function with quasi period $\frac{n^2}{2} \tau = \frac{4^2}{2} \tau = 8\tau$ on the left-hand side. This is also true for the classical $n=2$ case where the Theta function with quasi period $\tau$ is related to the Theta function with quasi period $\frac{n^2}{2} \tau = \frac{2^2}{2} \tau = 2\tau$ (see eq (\ref{l10})). This gives a new insight into Landen transformations.
\end{remark}

\section{$q$ series}
Let $q=e^{\pi i \tau}$ with $\mid q \mid<1$. Consider the following infinite product \cite{zhai2009additive},
\begin{equation*}
	(\alpha:\beta)_{\infty}=\prod_{k=0}^{\infty}(1-\alpha \beta ^k),~~\text{and}  ~~ (\alpha:\beta)_{\infty}(\gamma:\beta)_{\infty} \cdots (\zeta:\beta)_{\infty} = (\alpha, \gamma, \cdots, \zeta:\beta)_{\infty}.
\end{equation*}

Also, recall the following infinite  product expressions for theta functions,
\begin{equation}\label{q2}
	\begin{split}
		\theta(x,\tau)=& (q^2, -qe^{2\pi ix}, -qe^{-2\pi ix} : q^2)_{\infty},\\
		\theta_{-1/2,1/2}(x,\tau)= & 2q^{1/4}\sin 
		\pi x(q^2,q^2e^{2\pi ix},q^2e^{-2\pi ix};q^2)_{\infty},\\
		\theta_{1/2,0}(x,\tau)=& 2q^{1/4}\cos \pi x(q^2,-q^2e^{2\pi ix},-q^2e^{-2\pi ix};q^2)_{\infty},\\
		\theta_{0,1/2}(x,\tau)=&(q^2, qe^{2\pi ix}, qe^{-2\pi ix} : q^2)_{\infty}
	\end{split}
\end{equation}
(See, for example Zhai \cite{zhai2009additive}, equations (13)-(16) and Whittaker and Watson \cite{whittaker1996course}, pp. 469-470)

\begin{lemma}\label{R-R}(\cite{bailey1951}, eq. no. (4.1))
	The Rogers-Ramanujan identity 
	\begin{equation*}
		\begin{aligned}
			\prod_{n=1}^{\infty} \left( 1-q^{4n}\right) \left( 1+q^{4n-3}z^2 \right) \left( 1+q^{4n-1}z^{-2} \right)& +  z \prod_{n=1}^{\infty} \left( 1-q^{4n}\right) \left( 1+q^{4n-1}z^2 \right) \left( 1+q^{4n-3}z^{-2} \right)\\
			&= \prod_{n=1}^{\infty} \left( 1-q^{n}\right) \left( 1+q^{n-1}z \right) \left( 1+q^{n}z^{-1} \right).
		\end{aligned}
	\end{equation*}
\end{lemma}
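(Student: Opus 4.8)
The plan is to prove the Rogers--Ramanujan identity by showing that, after the Jacobi triple product, it is nothing more than the first identity of Lemma \ref{s3} rewritten in product form. For the duration of this sketch write $p=e^{\pi i\tau}$ for the theta nome and $w=e^{2\pi ix}$, keeping the letters $q,z$ for the free variables of the identity to be proved. The starting observation is that each of the three infinite products in the statement is a theta constant: applying the Jacobi triple product (equivalently, reading the expansions (\ref{q2}) backwards) turns the first product into $\sum_{m}q^{2m^{2}-m}z^{2m}$, the second into $\sum_{m}q^{2m^{2}+m}z^{2m}$, and the product on the right-hand side into $\sum_{m}q^{m(m-1)/2}z^{m}$. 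Once this is done the right-hand series splits according to the parity of $m$ precisely into the first series together with $z$ times the second, so the identity is revealed as the statement that a sum equals its even-index part plus its odd-index part.

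The second half of the argument identifies this parity decomposition with Lemma \ref{s3}(1). Using the series definitions I would record that
\[
\theta(4x,16\tau)=\sum_{m}p^{16m^{2}}w^{4m},\qquad \theta_{1/2,0}(4x,16\tau)=\sum_{m}p^{4(2m+1)^{2}}w^{4m+2},
\]
so that these two theta functions collect, respectively, the even- and odd-index terms of $\sum_{j}p^{4j^{2}}w^{2j}$, while $\tfrac12[\theta(x,\tau)+\theta_{0,1/2}(x,\tau)]=\sum_{k}p^{4k^{2}}w^{2k}$ is the full sum. Matching exponents then forces the specialization
\[
q=p^{8},\qquad z=p^{4}w^{2},
\]
under which $\theta(4x,16\tau)$ becomes the first product, $\theta_{1/2,0}(4x,16\tau)$ becomes $z$ times the second product, and $\tfrac12[\theta(x,\tau)+\theta_{0,1/2}(x,\tau)]$ becomes the right-hand product. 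The overall factor $2$ in Lemma \ref{s3}(1) then cancels and the Rogers--Ramanujan identity drops out verbatim.

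Since Lemma \ref{s3}(1) is an identity in $(x,\tau)$ and the map $(p,w)\mapsto(p^{8},p^{4}w^{2})$ has open image in $\{|q|<1\}\times\mathbb{C}^{\ast}$ (its Jacobian is $16p^{11}w\neq0$), the specialized identity extends to all admissible $(q,z)$ by analyticity, both sides being holomorphic wherever the products converge; equivalently one may simply regard the whole computation as an identity of convergent products in the two independent variables $q,z$, so that no continuation is really needed.

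The routine but delicate part is the exponent bookkeeping: verifying $16\left(m+\tfrac12\right)^{2}=4(2m+1)^{2}$ and the analogous completions of the square, tracking the prefactor $2p^{1/4}\cos\pi x$ hidden in the product form of $\theta_{1/2,0}$ in (\ref{q2}), and keeping the overall factor $2$ together with the single stray $z$ of the odd part correctly aligned, so that each of the three products in the statement is paired with the right theta function. I expect the essential point to be conceptual rather than an obstacle: the entire depth of the Rogers--Ramanujan identity here sits in the Jacobi triple product, while Lemma \ref{s3}(1) contributes only the trivial splitting of a theta series into its even- and odd-index parts.
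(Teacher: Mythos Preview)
Your proof is correct and follows essentially the same route as the paper: both reduce the identity to Lemma~\ref{s3}(1) via the Jacobi triple product, and your single specialization $q=p^{8}$, $z=p^{4}w^{2}$ is precisely the composite of the paper's two successive substitutions ($q^{4}\mapsto q$, $e^{4\pi ix}\mapsto z$, then $q^{2}\mapsto q$, $z\mapsto z/q^{1/2}$). Your remark that Lemma~\ref{s3}(1) is itself nothing more than the parity splitting of $\sum_{j}p^{4j^{2}}w^{2j}$ is a nice conceptual sharpening that the paper does not make explicit.
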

\begin{proof}
	Consider,
	\begin{equation*}
		\begin{aligned}
			\theta \big( x, \tau \big)+\theta_{0,\frac{1}{2}}(x, \tau)=& \sum_{m=-\infty}^{\infty}q^{m^2}e^{2\pi imx}+\sum_{m=-\infty}^{\infty}q^{m^2}e^{2\pi imx}(-1)^m,\\
			=& 2 \sum_{m=-\infty}^{\infty}q^{4m^2}e^{4\pi imx},
		\end{aligned}
	\end{equation*}
	
	Also consider the theta functions on the left side,
	
	\begin{equation*}
		\begin{aligned}
			\theta(4x, 16\tau)=&\left(q^{32}, -q^{16}e^{8\pi ix}, -q^{16}e^{-8\pi ix} : q^{32} \right)_{\infty},\\
			=& \left( q^{32}:q^{32}\right)_{\infty} \left( -q^{16}e^{8\pi ix}:q^{32}\right)_{\infty} \left( -q^{16}e^{-8\pi ix}:q^{32}\right)_{\infty},\\
			=& \prod_{m=0}^{\infty} \left( 1-q^{32m+32}\right) \left( 1+q^{32m+16}e^{8\pi ix} \right) \left( 1+q^{32m+16}e^{-8\pi ix} \right),\\
			& \text{replace $q^{4}$ by $q$ and $e^{4\pi i x}$ by $z$,  we have, }\\
			B=& \prod_{m=0}^{\infty} \left( 1-q^{8m+8}\right) \left( 1+q^{8m+4}z^2 \right) \left( 1+q^{8m+4}z^{-2} \right),\\
			& \text{replace $q^{2}$ by $q$ and $z$ by $z/q^{1/2}$,  we have, }\\
			B^{'}=& \prod_{m=0}^{\infty} \left( 1-q^{4m+4}\right) \left( 1+q^{4m+1}z^2 \right) \left( 1+q^{4m+3}z^{-2} \right),\\
			& \text{ put $m=n-1$,}\\
			B^{'}=&  \prod_{n=1}^{\infty} \left( 1-q^{4n}\right) \left( 1+q^{4n-3}z^2 \right) \left( 1+q^{4n-1}z^{-2} \right).
		\end{aligned}
	\end{equation*}	
	
	Also, 
	
	\begin{equation*}
		\begin{aligned}
			\theta_{1/2,0}(4x, 16\tau)=&2q^4\cos(4\pi x)\left(q^{32}, -q^{32}e^{8\pi ix}, -q^{32}e^{-8\pi ix} : q^{32} \right)_{\infty},\\
			=& q^4 (z+z^{-1})\left( q^{32}:q^{32}\right)_{\infty} \left( -q^{32}e^{8\pi ix}:q^{32}\right)_{\infty} \left( -q^{32}e^{-8\pi ix}:q^{32}\right)_{\infty},\\
			=&  q^4 (z+z^{-1})\prod_{m=0}^{\infty} \left( 1-q^{32m+32}\right) \left( 1+q^{32m+32}e^{8\pi ix} \right) \left( 1+q^{32m+32}e^{-8\pi ix} \right),\\
			& \text{replace $q^{4}$ by $q$ and $e^{4\pi i x}$ by $z$,  we have, }\\
			C=& q (z+z^{-1}) \prod_{m=0}^{\infty} \left( 1-q^{8m+8}\right) \left( 1+q^{8m+8}z^2 \right) \left( 1+q^{8m+8}z^{-2} \right),\\
			& \text{replace $q^{2}$ by $q$ and $z$ by $z/q^{1/2}$,  we have, }\\
			C^{'}=& q^{1/2} \left((z/q^{1/2})+(z/q^{1/2})^{-1}\right) \prod_{m=0}^{\infty} \left( 1-q^{4m+4}\right) \left( 1+q^{4m+3}z^2 \right) \left( 1+q^{4m+5}z^{-2} \right),\\
			=& z (1+qz^{-2}) \prod_{m=0}^{\infty} \left( 1-q^{4m+4}\right) \left( 1+q^{4m+3}z^2 \right) \left( 1+q^{4m+5}z^{-2} \right),\\
				\end{aligned}
		\end{equation*}	
			
			\begin{equation*}
				\begin{aligned}
			& \text{ put $m=n-1$,}\\
			C^{'}	=& z \prod_{n=1}^{\infty} \left( 1-q^{4n}\right) \left( 1+q^{4n-1}z^2 \right) \left( 1+q^{4n-3}z^{-2} \right).
		\end{aligned}
	\end{equation*}	
	
	By using same replacements in the left hand side of the first identity of lemma \ref{s3}, and by using Jacobi triple product identity
	
	\begin{equation*}
		\begin{aligned}
			A=& \sum_{m=-\infty}^{\infty}q^{m^2}z^{m},~~~(\text{by replacing $q^4$ by $q$ and putting $z$ in})\\
			=&  \prod_{n=1}^{\infty} \left( 1-q^{2n}\right) \left( 1+q^{2n-1}z \right) \left( 1+q^{2n-1}z^{-1} \right),\\
			& \text{replace $q^{2}$ by $q$ and $z$ by $z/q^{1/2}$,}\\
			A^{'}=& \prod_{n=1}^{\infty} \left( 1-q^{n}\right) \left( 1+q^{n-1}z \right) \left( 1+q^{n}z^{-1} \right).
		\end{aligned}
	\end{equation*}
	
	We have following "Rogers-Ramanujan type identity" given in (\cite{bailey1951}, eq. no. (4.1)):
	\begin{equation*}
		\begin{aligned}
			\prod_{n=1}^{\infty} \left( 1-q^{4n}\right) \left( 1+q^{4n-3}z^2 \right) \left( 1+q^{4n-1}z^{-2} \right)& +  z \prod_{n=1}^{\infty} \left( 1-q^{4n}\right) \left( 1+q^{4n-1}z^2 \right) \left( 1+q^{4n-3}z^{-2} \right)\\
			&= \prod_{n=1}^{\infty} \left( 1-q^{n}\right) \left( 1+q^{n-1}z \right) \left( 1+q^{n}z^{-1} \right).
		\end{aligned}
	\end{equation*}
\end{proof}
Evaluation first identity of lemma \ref{s3} at $x=0$, leads to
\begin{equation}\label{fq}
	\sum_{m=-\infty}^{\infty}q^{m^2}=\prod_{m=0}^{\infty}(1-q^{8m+8})\left[\prod_{m=0}^{\infty}(1+q^{8m+4})^2+2q\prod_{m=0}^{\infty}(1+q^{8m+8})^2\right].
\end{equation}
This identity representation of square numbers in the powers of $q$ advance by $8$.

We have the following result from the evaluation of the first identity of lemma \ref{s3} at $x=\tau$. This identity provides representation of odd square numbers in the powers of $q$ advance by $32$.
\begin{equation}\small
	\begin{split}
		\sum_{k=-\infty}^{\infty}q^{(2k+1)^2}
		=
		q\prod_{k=0}^{\infty}(1-q^{32k+32})(1+q^{32k+24})
		\left[\prod_{k=0}^{\infty}(1+q^{32k+8})+(1+q^8)\prod_{k=0}^{\infty}(1+q^{32k+40})\right].
	\end{split}
\end{equation}
 The evaluation of Landen transformation corresponding $n=4$ proved in lemma \ref{ldn}, at $x=0$  gives following $q$ identity is, 
\begin{equation}\label{fq}
	\begin{split}
		\sum_{m=-\infty}^{\infty}q^{m(2m+1)}
		=\frac{(q^{2};q^{2})^2_{\infty}[(-q;q^{2})^2_{\infty}+2q^{1/4}(-q^{2};q^{2})^2_{\infty}]^2-(q^{1/4};q^{1/4})^2_{\infty}(q^{1/8};q^{1/4})^4_{\infty}}{4q^{1/8}(q^{4};q^{4})_{\infty}[(-q^{2};q^{4})^2_{\infty}+2q^{1/2}(-q^{4};q^{4})^2_{\infty}]}.
	\end{split}
\end{equation}
This equation gives the triangular number identity for even-placed integers.
Additional $q$ identities can be similarly obtained.
\section{Concluding remarks}
In this article, we have shown that eigenvectors of the DFT, and elementary linear algebra can be exploited to obtain new linear, quadratic identities and Landen transformations among theta functions. The classical fourth-order identity is also a consequence of this elementary approach. The modular equations corresponding to new identities have been obtained.
\par There are corresponding identities that hold for generalized theta functions $\theta(x, \tau, \nu)$ for $\nu>1$. We have considered non-degenerate eigenvalues of the DFT (for $n\leq 8$). Some identities that involve degenerate eigenvalues of the DFT can also be considered.

	\bibliographystyle{plain}

\begin{thebibliography}{99} 
				
		\bibitem{mehta1987eigenvalues} Mehta, M.:Eigenvalues and eigenvectors of the finite Fourier transform,  J. Math. Phys. 28(4),781--785 (1987) \url{https://doi.org/10.1063/1.527619}. 
		
		\bibitem{matveev2001intertwining} Matveev, V. B.: Intertwining relations between the Fourier transform and discrete Fourier transform, the related functional identities and beyond, Inverse Probl. 17(4), 633 (2001) 
		\url{https://doi.org/10.1088/0266-5611/17/4/305 } 
		
		\bibitem{malekar2010discrete} Malekar, R., Bhate, H.: Discrete Fourier transform and Jacobi $\theta$ function identities, J. Math. Phys. 51(2), 023511 (2010) \url{ https://doi.org/10.1063/1.3272005 }
		
		\bibitem{malekar2017nu} Malekar, R.: $\nu$-theta Function Identities and Discrete Fourier Transform,  Int. j. Comput. Sci. Math. Sci. 8(10), 548--557 (2017) \url{https://doi.org/10.29055/jcms/692 }
		
		\bibitem{malekar2012discrete} Malekar, R., Bhate, H.: Discrete {F}ourier transform and Riemann identities for $\theta$ functions, Appl. Math. Lett. 25(10), 1415--1419 (2012)
		\url{https://doi.org/10.1016/j.aml.2011.12.014 }
		
		\bibitem{cooper2008determinant} Cooper, S., Toh, P. C.:  Determinant identities for theta functions, J. Math. Anal. Appl. 347(1), 1--7 (2008) \url{https://doi.org/10.1016/j.jmaa.2008.05.054 }
		
		
		\bibitem{toh2008generalized} Toh, P. C.: Generalized m-th order {J}acobi theta functions and the {M}acdonald identities, Int. J. Number Theory. 4(03), 461--474 (2008)
		\url{https://doi.org/10.1142/s1793042108001456}
		
		\bibitem{zhai2009additive}Zhai, Hong-Cun.: Additive formulae of theta functions with applications in modular equations of degree three and five, Integral Transforms Spec. Funct. 20(10), 769--773 (2009) \url{https://doi.org/10.1080/10652460902826698}
		
		\bibitem{terras1999Fourier} Terras, A.: {F}ourier analysis on finite groups and applications, Landon Mathematical Society. 43 (1999)
		
		\bibitem{mckean1999elliptic}McKean, H., Moll, V.:  Elliptic curves: function theory, geometry, arithmetic, Cambridge University Press New York. (1999)
		
		\bibitem{whittaker1996course} Whittaker, E.T., Watson, G.N.:  A course of modern analysis, 4th edition, Cambridge University Press. 1996
		
		\bibitem{bailey1951} Bailey. W. N : On the simplification of some identities of the Rogers-Ramanujan type, Proceedings of the London Mathematical Society, s3-1(1), 217–221, (1951)
		\url{https://doi.org/10.1112/plms/s3-1.1.217 }
		
	\end{thebibliography}

\end{document}